\title{A Lagrangian Klein bottle you can't squeeze}
\author{Jonny Evans}
\newcommand{\QQ}{\mathbb{Q}}
\newcommand{\RR}{\mathbb{R}}
\newcommand{\ZZ}{\mathbb{Z}}
\newcommand{\rp}[1]{\mathbf{RP}^{#1}}
\newcommand{\nog}{\textrm{\normalfont \textipa{\ng}}}
\theoremstyle:=definition,remark,plain\do{%
\expandafter\g@addto@macro\csname th@\theoremstyle\endcsname{%
\addtolength\thm@preskip\parskip
}%
}
\newtheorem{Theorem}{Theorem}[section]
\newtheorem{Lemma}[Theorem]{Lemma}
\newtheorem{Corollary}[Theorem]{Corollary}
\newtheorem{Conjecture}[Theorem]{Conjecture}
\theoremstyle{remark}
\newtheorem{Remark}[Theorem]{Remark}
\theoremstyle{definition}
\newtheorem{Question}[Theorem]{Question}
\newtheorem{Definition}[Theorem]{Definition}
\begin{document}
\maketitle
\begin{abstract}
Suppose you have a nonorientable Lagrangian surface \(L\) in a symplectic 4-manifold. How far can you deform the symplectic form before the smooth isotopy class of \(L\) contains no Lagrangians? I solve this question for a particular Lagrangian Klein bottle. I also discuss some related conjectures.
\end{abstract}

\section{Introduction}

Here are two overlapping questions:

\begin{Question}[Minimal nonorientable genus]\label{qun:min_genus}
Given a symplectic 4-manifold \((X,\omega)\) and a
\(\ZZ/2\)-homology class \(\beta\in H_2(X;\ZZ/2)\), what is the
minimal nonorientable genus of a nonorientable Lagrangian surface
\(L\subset X\) with \([L]=\beta\)?

\end{Question}
\begin{Question}[Nonsqueezing]\label{qun:squeeze}
Given a symplectic 4-manifold \((X,\omega)\) and a nonorientable
Lagrangian surface \(L\subset X\), how far can you deform \(\omega\)
in cohomology before there is no Lagrangian smoothly isotopic to
\(L\)?

\end{Question}
If \(L\) is orientable then these questions are less interesting: the
genus is determined by \([L]^2= - \chi(L)\) and, in Question
\ref{qun:squeeze}, it is necessary to deform \(\omega\) subject to the
cohomological condition \(\int_L[\omega]=0\). By contrast, if \(L\) is
nonorientable, we have \(H^2(L;\RR)=0\), which means that it is
possible to deform \(\omega\), keeping \(L\) Lagrangian, in such a way
that \([\omega]\) ranges over an open set in \(H^2(X;\RR)\).

I will give some general discussion of these questions in turn, then
give a concrete example of a Lagrangian Klein bottle for which
Question \ref{qun:squeeze} can be answered completely (Theorem
\ref{thm:klein}).

One running theme throughout the discussion is the use of {\em
visible} and {\em tropical} Lagrangians in almost toric 4-manifolds:
these provide a rich source of Lagrangian submanifolds coming
respectively from straight lines and tropical curves in integral
affine surfaces. I have found them useful for thinking about some of
the phenomena under discussion, and for formulating
conjectures. Visible Lagrangians were introduced in Symington's work
\cite{Symington}; tropical Lagrangians were introduced independently
by Mikhalkin \cite{Mikhalkin} and Matessi \cite{Matessi}.

\subsection{Acknowledgements}

I would like to thank Ivan Smith, Emily Maw, Georgios Dimitroglou
Rizell, and Daniel Cavey for helpful conversations around this
topic. My research is supported by EPSRC Grant EP/P02095X/2.

\section{The minimal genus question}

\subsection{Review}

\begin{Definition}
Define the {\em nonorientable genus} of the nonorientable surface
\(\#_k\rp{2}\) to be \(k\). Proposition 1.1 of \cite{DaiHoLi} shows
that any \(\ZZ/2\)-homology class in a symplectic 4-manifold can be
represented by some embedded nonorientable Lagrangian, so Question
\ref{qun:min_genus} has a well-defined answer, which I will
denote\footnote{{\ng} is the International Phonetic Alphabet symbol
for the ``ng'' sound.} by \(\nog(X,\omega,\beta)\).

\end{Definition}
\begin{Remark}
Audin \cite{Audin} showed that \[P_2(\beta)=\chi(L)=2-k\mod 4,\]
where \(P_2\) denotes the Pontryagin square operation and \(\chi\)
is the Euler characteristic. If you find a Lagrangian with
nonorientable genus \(k\) then you can perform a Hamiltonian finger
move locally to introduce pairs of intersections with index
difference 1 and then perform Polterovich surgery \cite{Polterovich}
on these self-intersections to get an embedded Lagrangian with
nonorientable genus \(k+4\). This means that the set of genera which
can be realised is \(\{\nog(X,\omega,\beta), \nog(X,\omega,\beta)+4,
\ldots\}\).

\end{Remark}
\begin{Remark}
The quantity \(\nog(X,\omega,\beta)\) is known in a small range of
cases, the lower bound being the principal difficulty.

\begin{enumerate}
\item When \(X\) satisfies \([\omega]\cdot c_1(X)>0\), we know that
\(\nog(X,\omega,0)=6\). This follows from Givental's construction
\cite{Givental} of a Lagrangian \(\#_6\rp{2}\) in the 4-ball and
from the fact, proved by Shevchishin \cite{Shev} that \(X\)
contains no nullhomologous Lagrangian Klein bottles (see also the
beautiful papers by Nemirovski \cite{Nem,NemirovskiKB}).

\item Let \(X_{a,b,c}\) be the blow-up of the 4-ball in three subballs
so that the symplectic areas of the exceptional spheres
\(E_1,E_2,E_3\) are \(a,b,c\). Shevchishin and Smirnov
\cite{ShevSmi} show that \(E_1+E_2+E_3\) contains a Lagrangian
\(\rp{2}\) if and only if the following inequalities all hold
\[a<b+c,\quad b<c+a,\quad c<a+b.\] They call these the {\em
symplectic triangle inequalities}. This gives the lower bound
\(\nog(X_{a,b,c},\omega,E_1+E_2+E_3) \geq 5\) when \(a,b,c\)
violate the triangle inequalities.

\end{enumerate}
\end{Remark}
\begin{Remark}\label{rmk:shevsmir}
After the fact, we see that there is a {\em tropical} or {\em almost
toric} motivation for the Shevchishin-Smirnov triangle
inequalities. The almost toric base diagram in Figure
\ref{fig:shevsmi} depicts the blow-up \(X_{a,b,c}\); the affine
lengths \(a,b,c\) indicated correspond to the sizes of the
exceptional spheres \(E_1,E_2,E_3\). In red you can see a tropical
curve; using the ideas of Mikhalkin \cite{Mikhalkin} and Matessi
\cite{Matessi}, we can construct a Lagrangian submanifold \(L\)
living over a (small thickening of a) tropical curve. This tropical
Lagrangian is diffeomorphic to \(\rp{2}\) if and only if the
inequalities all hold: the preimage of the point marked with
cross-hairs is a circle in \(L\) whose neighbourhood is a M\"{o}bius
strip.

\begin{figure}[htb]
\begin{center}
\begin{tikzpicture}
\filldraw[fill=gray!25,draw=none] (0,4/3) -- (0,4) -- (4,0) -- (4/3,0) -- cycle;
\draw[thick] (0,4) -- (0,4/3) -- (4/3,0) -- (4,0);
\draw[dashed] (1,3) -- (1,2) node {\(\times\)};
\draw[dashed] (3,1) -- (2,1) node {\(\times\)};
\draw[red,thick] (2,1) -- (1,1) -- (1,2);
\draw[red,thick] (1,1) -- (2/3,2/3);
\node[red,thick] at (2/3,2/3) {\(\oplus\)};
\draw[dotted] (4/3,0) -- (0,0) -- (0,4/3);
\node at (2/3,0) [below] {\(c\)};
\node at (0,2/3) [left] {\(c\)};
\draw[dotted] (0,2) -- (1,2) node [midway,above] {\(a\)};
\draw[dotted] (2,0) -- (2,1) node [midway,right] {\(b\)};
\begin{scope}[shift={(5,0)}]
\filldraw[fill=gray!25,draw=none] (0,2/3) -- (0,4) -- (4,0) -- (2/3,0) -- cycle;
\draw[thick] (0,4) -- (0,2/3) -- (2/3,0) -- (4,0);
\draw[dashed] (2/3,10/3) -- (2/3,2) node {\(\times\)};
\draw[dashed] (7/3,5/3) -- (2,5/3) node {\(\times\)};
\draw[red,thick] (2,5/3) -- (2/3,5/3) -- (2/3,2);
\draw[red,thick] (0,3/3) -- (2/3,5/3);
\draw[dotted] (2/3,0) -- (0,0) -- (0,2/3);
\node at (1/3,0) [below] {\(c\)};
\node at (0,1/3) [left] {\(c\)};
\draw[dotted] (0,2) -- (2/3,2) node [midway,above] {\(a\)};
\draw[dotted] (2,0) -- (2,5/3) node [midway,right] {\(b\)};
\end{scope}

\end{tikzpicture}
\end{center}
\caption{Almost toric base diagrams for \(X_{a,b,c}\) with a tropical curve in red. Left: The symplectic triangle inequalities and the associated tropical Lagrangian is diffeomorphic to \(\rp{2}\) (with the core circle of a cross-cap living over the point marked by the cross-hair symbol). Right: The symplectic triangle inequalities are violated and the associated tropical Lagrangian is diffeomorphic to a disc.}
\label{fig:shevsmi}
\end{figure}
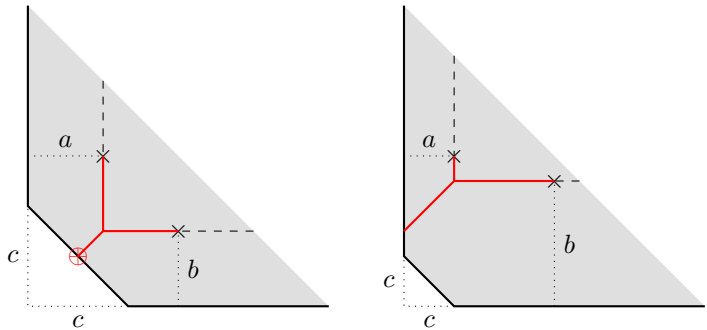

\end{Remark}
\subsection{\(S^2\times S^2\)}

Let \(X=S^2\times S^2\). Modulo an overall scale factor, any
symplectic form on \(X\) is diffeomorphic to one from the family
\(\lambda p_1^*\sigma+p_2^*\sigma\), where \(p_1,p_2\colon X\to S^2\)
are the two projections and \(\sigma\) is an area form on \(S^2\). We
know that \(\nog(X,\omega,0)=6\), which leaves two interesting
\(\ZZ/2\)-homology classes up to diffeomorphism: \(\beta=[\star\times
S^2]\) and the class \(\Delta\) of the diagonal. The Pontryagin
squares are \(P_2(\beta)=0\) and \(P_2(\Delta)=2\), so there is a
chance to represent \(\beta\) by Lagrangian Klein bottles.

\begin{figure}[htb]
\begin{center}
\begin{tikzpicture}
\fill[fill=gray!25,draw=black,thick] (-1.5,-1) -- (1.5,-1) -- (1.5,1) -- (-1.5,1) -- cycle;
\draw[thick,red] (-1.5,-0.75) -- (1.5,0.75);
\node at (0,0) [above] {\(\ell\)};
\node at (0,-1) [below] {\(\lambda\)};
\node[red,thick] at (-1.5,-0.75) {\(\otimes\)};
\node[red,thick] at (1.5,0.75) {\(\otimes\)};

\end{tikzpicture}
\end{center}
\caption{A visible Lagrangian Klein bottle in \((S^2\times S^2,\omega_\lambda)\) for \(\lambda<2\). The cores of two cross-caps are indicated with cross-hairs.}
\label{fig:visible_lag}
\end{figure}
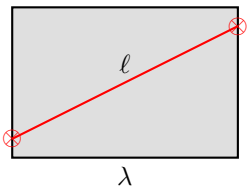

\begin{Lemma}\label{lma:klein}
If \(\lambda<2\) then \(\beta\) is represented by a Lagrangian Klein
bottle.
\end{Lemma}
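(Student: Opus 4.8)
The plan is to construct $L$ as a \emph{visible} Lagrangian over the segment $\ell$ in Figure~\ref{fig:visible_lag}, in the sense of Symington~\cite{Symington}. Normalise $\sigma$ so that $\int_{S^2}\sigma=1$; the $T^2$-action rotating the two factors of $X=S^2\times S^2$ then has moment map onto the rectangle $\Delta=[0,\lambda]\times[0,1]$, and the two vertical edges $\{x_1=0\},\{x_1=\lambda\}$ are copies of $\{\mathrm{pt}\}\times S^2$ over which the vanishing cycle of the torus fibration is the orbit circle $S^1_{(1,0)}$ of the first factor. Since $\lambda<2$, choose $a$ with $0<a$ and $a+\tfrac{\lambda}{2}<1$, and let $\ell\subset\Delta$ be the segment from $(0,a)$ to $(\lambda,a+\tfrac{\lambda}{2})$, of primitive integral direction $(2,1)$: its endpoints lie in the interiors of those two edges, and $\ell$ crosses each of them with multiplicity $\bigl|\det\!\left(\begin{smallmatrix}0&1\\2&1\end{smallmatrix}\right)\bigr|=2$. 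Over the open segment $\ell^{\circ}$, let $L^{\circ}\subset\mu^{-1}(\ell^{\circ})$ be the circle subbundle whose fibre over each $x$ is a suitable coset of the subcircle $S^1_{(1,-2)}\subset\mu^{-1}(x)$; because $(1,-2)$ pairs to zero with the direction $(2,1)$ of $\ell$, the surface $L^{\circ}$ is isotropic, hence Lagrangian.

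The substance of the argument is capping $L^{\circ}$ off at the two endpoints of $\ell$. Near an endpoint $p_0=(0,x_2^0)$, work in a Darboux chart $\mathbb{C}_z\times T^*S^1$ for a neighbourhood of $\mu^{-1}(p_0)$, in which $x_1=\tfrac12|z|^2$, the edge is $\{z=0\}$, rotation of $z$ generates the vanishing circle $S^1_{(1,0)}$, and the $T^*S^1$ factor carries the momentum coordinate $x_2$ and its conjugate angle $q$. The condition $\mu(L^{\circ})\subset\ell$ forces $x_2=x_2^0+\tfrac14|z|^2$ on $L^{\circ}$, and then the Lagrangian condition forces $q$ to be of the form $q\equiv -2\arg z+h(|z|)\pmod{2\pi}$ for some smooth $h$. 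One checks that the locus so described, together with the circle $\mu^{-1}(p_0)=\{z=0,\ x_2=x_2^0\}$, is a \emph{smoothly embedded} Lagrangian surface near $\mu^{-1}(p_0)$, and that a neighbourhood of that circle in it is the mapping cylinder of the double cover $S^1\to S^1$ --- that is, an embedded Möbius band (cross-cap) with core $\mu^{-1}(p_0)$. The multiplicity being exactly $2$ is what makes this work: multiplicity $1$ would leave a boundary circle, multiplicity $\ge 3$ a non-manifold point. Capping at both ends produces a compact embedded Lagrangian $L$, and as a cylinder with a Möbius band glued to each end, $L$ is a Klein bottle. Finally, pairing $L$ with generic fibres $S^2\times\{\mathrm{pt}\}$ and $\{\mathrm{pt}\}\times S^2$ --- whose moment images may be taken respectively disjoint from $\ell$ and transverse to $\ell^{\circ}$ in one point --- computes $[L]$ and identifies it with $\beta$.

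The only real obstacle is the capping step: establishing that local normal form and checking that it describes an embedded Lagrangian Möbius band rather than a surface with boundary or with a singular point. (Equivalently, one may reduce $X$ by the circle action rotating the first factor once and the second factor $-2$ times and recognise $L$ as the lift of an embedded arc in the reduced spaces, where the cross-cap shows up as the arc running into a reduction singularity.) Everything else --- isotropy of $L^{\circ}$, the topological identification of $L$, the homology class, and the fact that $\lambda<2$ is precisely the room needed to fit $\ell$ into $\Delta$ with both endpoints in the interior of $\partial\Delta$ --- is routine.
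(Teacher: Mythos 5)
Your construction is essentially the paper's own: the paper also builds the Klein bottle as a visible Lagrangian over the slope-$1/2$ segment, writing it globally in action--angle (cylindrical) coordinates as $2p_2=p_1$, $\theta_2=-2\theta_1$, and identifying the topology by the $2$-to-$1$ collapse of the fibre circles onto the two core circles over the endpoints of $\ell$ --- exactly your two cross-caps, with the smoothness of the capping left at a comparable level of detail to your ``one checks'' (your local model $q\equiv-2\arg z$, parametrised by the quotient of $\mathbb{R}\times S^1$ by $(s,\psi)\sim(-s,\psi+\pi)$, does verify it). So the proposal is correct and follows the same route, differing only in working with local Darboux charts at the edges rather than the global equations.
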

\begin{proof}
The rectangle in Figure \ref{fig:visible_lag} is the toric moment
polygon for the standard Hamiltonian torus action on \(S^2\times
S^2\) with symplectic form \(\omega_\lambda\). There is a Lagrangian
Klein bottle living over the line \(\ell\) (slope \(1/2\)) in the
diagram. To see this, consider the two \(S^2\) factors sitting
inside \(\RR^3\) and let \((p_j,\theta_j)\) be cylindrical
coordinates on the \(j\)th factor (\(j=1,2\)). These are
action-angle coordinates, so \(\omega=\sum dp_j\wedge
d\theta_j\). The line \(\ell\) is given by \(2p_2=p_1\) and the
Lagrangian Klein bottle is cut out by this equation together with
\(\theta_2=-2\theta_1\). This is certainly Lagrangian for this
symplectic form. To see that \(L\) is a Klein bottle, notice that
the regular level sets of \(p_1\) restricted to \(L\) are circles
\(\theta_2=-2\theta_1\) in the \((\theta_1,\theta_2)\)-torus, which
collapse \(2\)-to-\(1\) onto the circles of maxima and minima at
\(p_1=\pm \lambda\) (as the torus collapses to the circle with
coordinate \(\theta_2\)). The projections of these circles are
denoted with cross-hairs in Figure \ref{fig:visible_lag}. \qedhere

\end{proof}
\begin{Remark}\label{rmk:lma:klein}
This \(L\) is a {\em visible Lagrangian} in the sense of Symington
\cite{Symington} as well as being a tropical Lagrangian in the sense
of Matessi \cite{Matessi} and Mikhalkin \cite{Mikhalkin}. This Klein
bottle is well-known: it appears in \cite{CastroUrbano} as a
Hamiltonian minimal Lagrangian, in \cite{Goodman} as a Hamiltonian
suspension, and in \cite{DaiHoLi} as a fibre connect-sum of
\(\rp{2}\)s. It has minimal Maslov number 1 and has a monotone
representative in its Lagrangian isotopy class if \(\lambda=1\).

\end{Remark}
If \(\lambda\geq 2\) then the line \(\ell\) does not fit into the
rectangle. The following conjecture seems natural; while I cannot
prove it, it inspired Theorem \ref{thm:klein} below.

\begin{Conjecture}\label{conj:kb}
There is no Lagrangian Klein bottle in the class \(\beta\) if
\(\lambda\geq 2\).

\end{Conjecture}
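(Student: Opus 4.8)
The plan is to argue by contradiction: suppose \(L\subset(S^{2}\times S^{2},\omega_{\lambda})\) is a Lagrangian Klein bottle with \([L]=\beta\) and \(\lambda\geq 2\), and try to extract from \(L\) a configuration of holomorphic discs with boundary on \(L\), together with enough control on their areas, to force \(\lambda<2\) — in effect, to run the affine-length computation behind Lemma \ref{lma:klein} in reverse.

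First I would exploit the ruled structure. Fix a metric on \(L\cong K\), a Weinstein identification of a neighbourhood \(\nu(L)\) with a disc cotangent bundle \(D^{*}_{\epsilon}L\), and a generic \(\omega_{\lambda}\)-tame almost complex structure \(J\) that is cylindrical near \(\partial\nu(L)=S^{*}_{\epsilon}L\). Let \(\gamma\) be the ruling class with \(\gamma\cdot\beta=1\). By McDuff's work on rational ruled surfaces the \(J\)-holomorphic spheres in class \(\beta\), and those in class \(\gamma\), each foliate \(X\); since \(\beta\cdot\beta=0\) and \(\beta\cdot\gamma=1\), the \(\beta\)-spheres meet \(L\) in an even number of points and the \(\gamma\)-spheres in an odd number, and as \(L\) is compact some of each kind actually do meet it. The map from \(L\) to the leaf space \(S^{2}\) of the \(\gamma\)-ruling has mod-\(2\) degree \(1\), so — since \(\chi(L)=0\) — it has fold singularities along which \(L\) is tangent to a \(\gamma\)-sphere. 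In short, \(L\) is thoroughly entangled with the holomorphic spheres near it, and it is a general principle for Lagrangians in uniruled symplectic \(4\)-manifolds that such entanglement forces them to bound nonconstant holomorphic discs.

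The quantitative heart of the argument would be a neck-stretching along \(S^{*}_{\epsilon}L\). The spheres in the two rulings then degenerate, in the sense of symplectic field theory (SFT) compactness, into buildings whose top level lives in the completion of \(X\setminus\nu(L)\) — carrying only negative ends, asymptotic to closed geodesics of \(K\) — and whose lower levels live in \(T^{*}K\); exactness of \(T^{*}K\) forces the lower levels to carry only \(O(\epsilon)\) of the total area, so essentially all the area of each ruling sphere survives on top. Via the standard dictionary between such punctured curves and Lagrangian Floer discs one should read off, from the top level, holomorphic discs of low Maslov index with boundary on \(L\); for the visible Klein bottle these are the two Maslov-\(1\) discs \(D_{1},D_{2}\) sitting at the cross-caps, whose boundaries generate the infinite-order summand of \(H_{1}(L;\ZZ)\) and satisfy \(2\bigl([\partial D_{1}]+[\partial D_{2}]\bigr)=0\) in \(H_{1}(L;\ZZ)\), so that \(2(D_{1}+D_{2})\) closes up to a class \(A\in H_{2}(X;\ZZ)\) with \(\langle c_{1},A\rangle=2\) and positive area (for the visible model, one of the two ruling classes). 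Then \(\operatorname{area}(D_{1})+\operatorname{area}(D_{2})=\tfrac12\,\omega_{\lambda}(A)\) is pinned down; combining this with the positivity of the individual disc areas and a positivity-of-intersection argument between \(D_{1}\cup\overline{D}_{2}\) and the other ruling should reproduce \(\lambda<2\), just as the affine-length computation in the moment polygon does for the visible model.

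The obstacle — and the reason this is only a conjecture — lies in this last step. The individual quantities \(\operatorname{area}(D_{i})\) are moduli of \(L\): their boundaries represent an infinite-order class in \(H_{1}(L)\), so they are not determined by homology, and without knowing the smooth isotopy class of \(L\) in advance one cannot pin them down and cannot close the area estimate. For the visible model these areas are explicit, and in the setting of Theorem \ref{thm:klein} the isotopy type is part of the hypothesis, which is exactly what makes that case tractable. Compounding this is a genuine analytic difficulty: the Reeb flow on \(S^{*}L\) is the geodesic flow of a Klein bottle, which is never nondegenerate, so SFT compactness produces Morse--Bott families of buildings, and excluding multiply-covered components over the many short closed geodesics of \(K\), together with the exotic limiting configurations they could create, is not known to be possible here. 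These are the same kinds of obstacle surmounted in other settings by Shevchishin's non-existence theorem \cite{Shev} and the Shevchishin--Smirnov inequalities \cite{ShevSmi}, and a proof of the conjecture would presumably need comparable technology adapted to the Klein bottle.
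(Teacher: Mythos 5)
You are right to present this as a strategy rather than a proof: the statement is Conjecture \ref{conj:kb}, which the paper explicitly leaves open (``while I cannot prove it, it inspired Theorem \ref{thm:klein}''), so there is no proof in the paper to match, and your sketch does not close the argument either. The genuine gap is exactly where you locate it, but it is worth naming it more sharply. Your middle paragraph silently imports the structure of the \emph{visible} Klein bottle of Lemma \ref{lma:klein}: two Maslov--1 discs at ``the cross-caps'', boundaries generating the infinite-order part of \(H_1(L;\ZZ)\), and the doubled discs closing up to a ruling class of known area. For an arbitrary Lagrangian Klein bottle in class \(\beta\) none of this is available -- the hypothesis of the conjecture fixes only the \(\ZZ/2\)-homology class, not the smooth isotopy class, so the ``standard dictionary'' step produces discs whose boundary classes and areas you cannot pin down, and the area inequality \(\lambda<2\) never materialises. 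The paper's own remark after Theorem \ref{thm:klein} identifies the same wall from a slightly different angle: to run the nonsqueezing argument one would need a pair of symplectic spheres in the class \([S^2\times\star]\) disjoint from (``linking'') \(K\), playing the role of the cut spheres \(S_0,S_1\); since that class does not have minimal area, its holomorphic representatives can break under neck-stretching (into configurations involving negative-square classes), and one loses control of the SFT limit. Your proposal to stretch both rulings runs into precisely this breaking problem for the larger-area ruling, in addition to the disc-area indeterminacy you mention.

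One correction of emphasis: the Morse--Bott degeneracy of the geodesic flow on the flat Klein bottle is not the decisive obstruction. The proof of Theorem \ref{thm:klein} uses exactly a flat metric and Mohnke's almost complex structure \(J^-\) \cite{Mohnke}, and it sidesteps nondegeneracy and transversality issues entirely: what it needs is the classification of \(J^-\)-holomorphic cylinders through the zero-section, the absence of finite-energy planes in \(T^*K\) and \(\RR\times M\), and the \(\ZZ/2\)-intersection parity of Remark \ref{rmk:intersection}, together with the fact that the sphere class \(\alpha\) being stretched has \emph{minimal} area so its limit building is rigid enough to analyse. So the analytic machinery for the Klein bottle end is in place; what is missing for Conjecture \ref{conj:kb} is the geometric input -- spheres or discs linking an \emph{unknown} Lagrangian Klein bottle with controlled area -- and that is why the paper proves only the nonsqueezing statement for \(U_I\), where the spheres \(S_0,S_1\) can be arranged by hand via the symplectic cut and the hypothesis on \(\iota_*\) on \(H_1\) substitutes for knowledge of the isotopy class.
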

It is interesting to consider what happens for large \(\lambda\). We
have essentially no tools to prove lower bounds when the Lagrangians
are of high genus and may be Floer-theoretically obstructed. The most
pessimistic conjecture is that Lagrangians with high genus become
flexible enough that:

\begin{Conjecture}\label{conj:large_lambda}
\(\lim_{\lambda\to\infty}\nog(X,\omega_\lambda,\beta)<\infty\).

\end{Conjecture}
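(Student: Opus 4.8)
The plan is to try to \emph{establish} Conjecture~\ref{conj:large_lambda} by constructing, for every $\lambda$, an embedded nonorientable Lagrangian $L_\lambda\subset(X,\omega_\lambda)$ in the class $\beta$ whose nonorientable genus is bounded independently of $\lambda$. By Audin's congruence the admissible genera in the class $\beta$ are $\{2,6,10,\dots\}$, and since Conjecture~\ref{conj:kb} predicts that genus $2$ is unavailable once $\lambda\geq2$, the natural target is a Lagrangian $\#_6\rp{2}$ (or, failing that, $\#_{10}\rp{2}$) present for all $\lambda$. I would look for $L_\lambda$ among visible and tropical Lagrangians.

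The difficulty is visible already in the toric model $[0,\lambda]\times[0,1]$ of $(X,\omega_\lambda)$: as $\lambda\to\infty$ this degenerates to a thin strip, no chord of slope bounded away from $0$ fits, and a short computation shows that the naive analogue of Lemma~\ref{lma:klein} over a shallow chord (fibre circle $\theta_2=-2k\theta_1$, $k\geq2$) fails to be embedded --- it acquires a circle of $k$-fold self-intersections --- which is consistent with Conjecture~\ref{conj:kb}. To get around this I would pass to an almost toric base $B_\lambda$ for $(X,\omega_\lambda)$ obtained by a sequence of nodal trades and node slides in the sense of Symington~\cite{Symington}, chosen so that the shear monodromy around the nodes \emph{absorbs} the growth of $\lambda$: the aim is an integral affine surface that still presents $(X,\omega_\lambda)$ but contains a region of $\lambda$-independent affine shape, plus some long edges whose combinatorial role is inert.

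In $B_\lambda$ I would then exhibit a tropical curve $C$ of \emph{fixed} combinatorial type --- fixed vertices and edge directions, fixed incidences with the nodes and with $\partial B_\lambda$, only some edge lengths growing with $\lambda$. By the construction of Mikhalkin~\cite{Mikhalkin} and Matessi~\cite{Matessi}, $C$ carries a Lagrangian $L(C)=L_\lambda$ whose diffeomorphism type is read off from the combinatorics of $C$ alone (interior loops contribute handles; the way edges terminate on $\partial B_\lambda$ or run into nodes along the local eigendirection contributes cross-caps), so $L_\lambda$ is a single surface $\#_k\rp{2}$ for all $\lambda$. Finally I would compute $[L_\lambda]\in H_2(X;\ZZ/2)$ from the balancing of $C$ against the boundary and node data and check that it equals $\beta=[\star\times S^2]$; since $C$ has fixed combinatorial type this is one finite verification.

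The main obstacle is the first two steps taken together: producing a single template $(B_\lambda,C)$ that works for all large $\lambda$ and yields a nonorientable surface of \emph{bounded} genus in the class $\beta$. One has to control the parity of cross-caps (so that $L_\lambda$ is genuinely nonorientable, yet with $k$ small --- and $k=2$ should be impossible by Conjecture~\ref{conj:kb}, so one is likely forced to $k=6$ or $k=10$) while simultaneously ensuring that stretching the long edges creates neither new handles nor new cross-caps and alters the class only by elements trivial mod~$2$. It is quite possible that no such uniform family exists --- that is why this remains a conjecture --- and a genuinely non-toric input may be required: for instance a Biran-type circle-bundle construction over a Lagrangian $\#_k\rp{2}$, Lagrangian surgery on a controlled configuration of spheres, or an analysis inside an arbitrarily fat symplectic neighbourhood $(S^2\times D^2(R),\sigma\oplus\sigma_R)$ of the fibre sphere, which as $R\to\infty$ reduces the question to $S^2\times\CC$. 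Conversely, proving the opposite inequality $\nog(X,\omega_\lambda,\beta)\to\infty$ looks out of reach of present methods, as we have no way to bound below the genus of Lagrangians that are Floer-theoretically obstructed; this asymmetry is presumably why the statement is only a pessimistic conjecture.
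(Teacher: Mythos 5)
The statement you are addressing is a \emph{conjecture}: the paper offers no proof of it, and indeed explicitly says that no tools are currently available even to decide it. Your text is likewise not a proof but a research plan, and by your own admission (``it is quite possible that no such uniform family exists --- that is why this remains a conjecture'') the central step is left entirely open. Concretely, everything hinges on producing, for all large \(\lambda\), an almost toric base \(B_\lambda\) for \((X,\omega_\lambda)\) containing a tropical curve of \emph{fixed} combinatorial type in the class \(\beta\) whose associated Lagrangian has bounded nonorientable genus. Nothing in the proposal constructs \(B_\lambda\) or \(C\), verifies that nodal trades and node slides can ``absorb'' the growth of \(\lambda\) while keeping a \(\lambda\)-independent region that supports a curve meeting the class constraint, or checks the balancing, embeddedness and cross-cap count for such a curve. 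That missing construction \emph{is} the conjecture; asserting it as a target does not advance the argument.

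It is also worth noting that the paper's own Lemma \ref{lma:trop} is exactly an attempt along the lines you propose, and it illustrates why the plan is problematic: the tropical curves used there must have odd intersection with the vertical spheres and even intersection with the horizontal ones to land in \(\beta\), and satisfying this inside the long thin base while keeping the Lagrangian embedded (after Polterovich surgery at the non-smooth vertices) forces a repeating pattern whose length, and hence whose genus \(20\ell+2\), grows linearly with \(\lambda\). The remark following that lemma states that it seems hard to do significantly better with tropical Lagrangians, so the burden on your proposal is precisely to overcome the mechanism that makes the known bound diverge --- e.g.\ to show that the monodromy introduced by nodes can replace the repeated pattern --- and no argument for this is given. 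Your Audin-congruence bookkeeping (admissible genera \(\{2,6,10,\dots\}\) in the class \(\beta\)) is correct, and your closing remarks about the inaccessibility of lower bounds match the paper's framing, but as it stands the proposal neither proves the conjecture nor reduces it to a verifiable finite check.
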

The following lemma gives an upper bound on
\(\nog(X,\omega_\lambda,\beta)\), but it goes to infinity with
\(\lambda\).

\begin{Lemma}\label{lma:trop}
We have \(\nog(X,\omega_\lambda,\beta)\leq 20\ell+2\) when
\(\lambda<10\ell+2\).
\end{Lemma}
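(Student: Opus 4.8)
The plan is to construct such a Lagrangian explicitly by the tropical/visible technology already in play, exploiting the extra room that appears when one passes to a bigger almost toric model. The key observation is that although the line $\ell$ of slope $1/2$ ceases to fit in the rectangle once $\lambda\geq 2$, one can first perform nodal trades (Symington's almost toric surgeries) on the toric boundary of $S^2\times S^2$ to replace corners by interior nodes, thereby ``unfolding'' the base diagram into a much longer integral affine strip. In such a long strip a line of rational slope has time to wrap many times, and over it we can build a tropical Lagrangian; but the price of wrapping, and of passing each node, is that the wrapped line no longer closes up into a Klein bottle --- the monodromy around the nodes forces the topology to be more complicated, namely a connect sum $\#_k\rp2$ with $k$ growing linearly in the number of wraps, hence linearly in $\lambda$.

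Concretely, first I would recall from Lemma \ref{lma:klein} that over a segment of slope $1/2$ one gets a Klein bottle, and note that the relevant affine length available in the standard rectangle is bounded by a fixed multiple of the short side. Then I would introduce $n$ nodes (via nodal trades at, or branch moves near, the boundary) so that the monodromy-invariant line can be taken of some fixed slope and the total affine length it traverses before returning to its starting fibre is on the order of $n$ times a constant; choosing the slope so that the line passes close to each node, the preimage over a thickening of this broken line is, by the Mikhalkin--Matessi construction, a closed nonorientable surface. Counting the cross-caps: each time the line reflects off the relevant walls or passes a node we pick up a controlled number of $\rp2$ summands, and a bookkeeping of Euler characteristics (using $\chi=2-k$ and the fact that each node contributes a fixed decrement) yields $k\leq 20\ell+2$ precisely when the strip is long enough to accommodate the construction, i.e. when $\lambda<10\ell+2$. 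The numerics $20$ and $2$ come out of the slope and the number of reflections needed to make the segment close up; the constraint $\lambda<10\ell+2$ is exactly the condition that the broken line of the chosen combinatorial type fits in the deformed base diagram.

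The main obstacle will be verifying that the surface one builds is genuinely embedded, closed, and lies in the correct $\ZZ/2$-homology class $\beta=[\star\times S^2]$ rather than in $0$ or $\Delta$. Embeddedness requires choosing the thickening and the slope generically so that distinct branches of the broken line project to disjoint pieces of the Lagrangian, and requires checking that the local model at each node (a Lagrangian pair-of-pants / Whitney-type piece) glues smoothly to the rest; this is where one has to be careful, since a careless choice gives immersed rather than embedded surfaces or changes the genus count. The homology class is pinned down by intersecting the Lagrangian with the two ruling classes $[\star\times S^2]$ and $[S^2\times\star]$ and reading off the signed count from how many times the broken line crosses the corresponding coordinate directions in the base; arranging this intersection pattern to match $\beta$ constrains the combinatorics of the broken line and is really the heart of why the coefficient is $20$ and not something smaller. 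Once embeddedness and the homology class are secured, the Euler characteristic computation and hence the genus bound $\nog(X,\omega_\lambda,\beta)\leq 20\ell+2$ follows routinely.
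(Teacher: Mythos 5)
There is a genuine gap here: you never actually construct the Lagrangian, and the mechanism you describe is not the one that produces the bound. Your sketch relies on nodal trades, a long ``unfolded'' strip, a wrapped line of slope \(1/2\), and the assertion that passing nodes or ``reflecting off walls'' contributes cross-caps with a fixed Euler-characteristic decrement. None of this is substantiated: straight lines in an integral affine base do not reflect, a visible Lagrangian over a line passing near a node does not acquire a cross-cap from the node (cross-caps in this setting arise where an edge meets the toric boundary in a direction whose collapse is \(2\)-to-\(1\), as in Lemma \ref{lma:klein}), and no computation is offered that produces the constants \(20\) and \(2\) or the threshold \(\lambda<10\ell+2\) --- they are simply asserted to ``come out of'' the combinatorics. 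Since the entire content of the lemma is the explicit construction and the resulting count, a proposal that leaves both to unspecified bookkeeping does not establish the statement.

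The paper's route is different and concrete: it stays in the standard toric rectangle for \((S^2\times S^2,\omega_\lambda)\) (no nodal trades) and writes down an explicit tropical curve whose internal edges are parallel to \((3,1)\) or \((2,-1)\) and whose external edges are parallel to \((2,-1)\) or \((1,2)\); the parities of intersection with the horizontal and vertical spheres put the associated Lagrangian in the class \(\beta\). The crucial point you are missing is that the trivalent vertices of this curve are not smooth (each has self-intersection \(2\)), so by Mikhalkin's Theorem 3.2 the tropical Lagrangian is \emph{immersed}, with \(8\ell\) double points, and carries \(4\ell+2\) cross-caps where the external edges hit the boundary. One must then perform Polterovich surgery at the double points --- this is unavoidable, not something to be dodged by a generic choice of thickening --- and it is precisely this surgery that inflates the genus: the result is a genus-\(8\ell\) surface with \(4\ell+2\) cross-caps, of Euler characteristic \(2-16\ell-4\ell-2=-20\ell\), hence nonorientable genus \(20\ell+2\). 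The constraint on \(\lambda\) is just the affine height needed to fit \(\ell\) repetitions of the pattern into the rectangle. Your flagged ``obstacles'' (embeddedness, the homology class) are real, but the resolution is the one above, not genericity; without the explicit curve, the vertex/double-point count, and the surgery step, the claimed bound is not proved.
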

\begin{proof}
If \(\lambda<10\ell+1\) then there is a tropical Lagrangian in the
class \(\beta\) with nonorientable genus \(20\ell+2\). We show the
tropical curve for \(\ell=2\) in Figure \ref{fig:trop_diag} below;
for general \(\ell\) we simply repeat the pattern between the
vertical blue bars as often as required to get from the left-hand
side to the right-hand side of the rectangle.

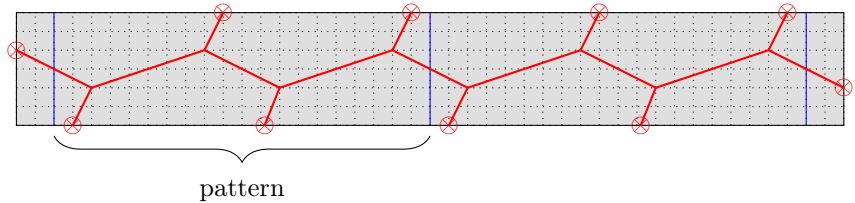
\begin{figure}[htb]
\begin{center}
\begin{tikzpicture}
\begin{scope}[scale=1.5]
\filldraw[fill=gray!25,draw=black] (0,0) -- (22/3,0) -- (22/3,1) -- (0,1) -- cycle;
\draw[step=1/6,dotted] (0,0) grid (22/3,1);
\draw[blue] (1/3,0) -- (1/3,1);
\draw[blue] (11/3,0) -- (11/3,1);
\draw[blue] (21/3,0) -- (21/3,1);
\begin{scope}[scale=1/3]
\node[red,thick] at (0,2) {\(\otimes\)};
\node[red,thick] at (1.5,0) {\(\otimes\)};
\node[red,thick] at (6.6,0) {\(\otimes\)};
\node[red,thick] at (5.5,3) {\(\otimes\)};
\node[red,thick] at (10.5,3) {\(\otimes\)};
\draw[thick,red] (0,2) -- (2,1) -- (5,2) -- (7,1) -- (10,2) -- (12,1);
\draw[thick,red] (2,1) -- (1.5,0);
\draw[thick,red] (7,1) -- (6.6,0);
\draw[thick,red] (5,2) -- (5.5,3);
\draw[thick,red] (10,2) -- (10.5,3);
\draw [decorate,decoration={brace,amplitude=10pt,mirror,raise=4pt},yshift=0pt] (1,0) -- (11,0) node [black,midway,below=0.6cm] {pattern};
\end{scope}
\begin{scope}[scale=1/3,shift={(10,0)}]
\node[red,thick] at (12,1) {\(\otimes\)};
\node[red,thick] at (1.5,0) {\(\otimes\)};
\node[red,thick] at (6.6,0) {\(\otimes\)};
\node[red,thick] at (5.5,3) {\(\otimes\)};
\node[red,thick] at (10.5,3) {\(\otimes\)};
\draw[thick,red] (2,1) -- (5,2) -- (7,1) -- (10,2) -- (12,1);
\draw[thick,red] (2,1) -- (1.5,0);
\draw[thick,red] (7,1) -- (6.6,0);
\draw[thick,red] (5,2) -- (5.5,3);
\draw[thick,red] (10,2) -- (10.5,3);
\end{scope}
\end{scope}

\end{tikzpicture}
\end{center}
\caption{A tropical curve giving a Lagrangian of genus \(20\ell+2\) in the case \(\ell=2\).}
\label{fig:trop_diag}
\end{figure}

The edges of this tropical curve are:
\begin{itemize}
\item internal edges parallel to either \((3,1)\) or \((2,-1)\),
\item external edges parallel to \((2,-1)\) or \((1,2)\).
\end{itemize}
The corresponding tropical Lagrangian intersects the horizontal
spheres with even multiplicity and the vertical spheres with odd
multiplicity, so it inhabits the class \(\beta\). The vertices of
the tropical curve are not smooth\footnote{At each vertex of a
tropical curve, the outgoing edges \(v_1,v_2,v_3\) must sum to zero;
if we write \(m\) for the determinant \(|v_1\wedge v_2|=|v_2\wedge
v_3|=|v_3\wedge v_1|\) then the self-intersection of this vertex is
defined to be \(\frac{m-1}{2}\). Smoothness means all vertices have
self-intersection zero.}: each has self-intersection equal to 2. By
{\cite[Theorem 3.2]{Mikhalkin}}, this tropical curve therefore
yields an immersed Lagrangian with \(8\ell\) double points and
\(2+4\ell\) cross-caps where it hits the boundary (marked with
cross-hairs in Figure \ref{fig:trop_diag}). When we perform
Polterovich surgery at the double points, we obtain a Lagrangian
which is topologically a surface of genus \(8\ell\) with \(4\ell+2\)
cross-caps. This has Euler characteristic
\(2-16\ell-4\ell-2=-20\ell\), so the nonorientable genus is
\(2+20\ell\). \qedhere

\end{proof}
\begin{Remark}
It seems harder to make the genus significantly smaller using
tropical Lagrangians, but there is no reason to believe that
tropical Lagrangians should give a sharp upper bound for \(\nog\).

\end{Remark}
\section{Nonsqueezing}

\subsection{Statement}

For each connected open interval \(I\subset\RR\) (length \(|I|\)), let
\(C_I\) denote the cylinder \(I\times(\RR/2\pi\ZZ)\) with coordinates
\((p,\theta)\), equipped with the symplectic form
\(\frac{1}{2\pi}dp\wedge d\theta\); this has total area \(|I|\). Let
\(S^2\) denote the 2-sphere equipped with its area form \(\sigma\)
satisfying \(\int_{S^2}\sigma=2\).

Let \(U_I=S^2\times C_I\). Note that \(U_I\) is obtained from
\((S^2\times S^2,\omega_{|I|})\) by excising the spheres
\(S^2\times\{n,s\}\), where \(n,s\) denote the poles of the second
factor. Arguing as in Lemma \ref{lma:klein}, we see that if \(|I|>1\),
the only nontrivial class \(\beta\in H_2(U_I;\ZZ/2)\) is represented
by a Lagrangian Klein bottle (see Figure \ref{fig:kb_reprise}).

\begin{figure}[htb]
\begin{center}
\begin{tikzpicture}
\filldraw[fill=gray!25,draw=none] (-1.5,-1) -- (1.5,-1) -- (1.5,1) -- (-1.5,1) -- cycle;
\draw[thick,black] (-1.5,-1) -- (-1.5,1);
\draw[thick,black] (1.5,-1) -- (1.5,1);
\draw[thick,red] (-1.5,-0.75) -- (1.5,0.75);
\node at (0,-1) [below] {\(2\)};
\node at (-1.5,0) [left] {\(|I|\)};
\node[red,thick] at (-1.5,-0.75) {\(\otimes\)};
\node[red,thick] at (1.5,0.75) {\(\otimes\)};

\end{tikzpicture}
\end{center}
\caption{The visible Lagrangian Klein bottle in \(U_I\) when \(|I|>1\).}
\label{fig:kb_reprise}
\end{figure}
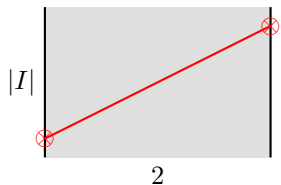

\begin{Theorem}\label{thm:klein}
Suppose that \(|I|\leq 1\). If \(\iota\colon K\to U_I\) is a
Lagrangian embedding of the Klein bottle in the class \(\beta\) then
\(\iota_*\colon \QQ=H_1(K;\QQ)\to H_1(U_I;\QQ)=\QQ\) is the zero
map.

\end{Theorem}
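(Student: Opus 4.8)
The plan is to pass to the closed manifold and argue with $J$-holomorphic curves. Compactify by restoring the two excised fibres, writing $U_I=\overline{U_I}\setminus(F_n\sqcup F_s)$, where $\overline{U_I}=S^2\times S^2$ carries the product form $\omega$ with $\omega(A)=2$ and $\omega(B)=|I|\le 1$ for $A=[S^2\times\mathrm{pt}]$, $B=[\mathrm{pt}\times S^2]$, and $F_n=S^2\times\{n\}$, $F_s=S^2\times\{s\}$ are disjoint spheres in class $A$. Under $U_I\hookrightarrow\overline{U_I}$ the class $[K]=\beta$ pushes to $A$ in $H_2(\overline{U_I};\ZZ/2)$, and $H_1(U_I;\QQ)=\QQ\langle\delta\rangle$ where $\delta$ is the class of the core circle of a $C_I$-fibre. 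Suppose, for contradiction, that $\iota_*\ne 0$. Then the generator of $H_1(K;\QQ)$ — represented by a crosscap core, i.e.\ an orientation-reversing embedded circle $\gamma\subset K$, one of the two cores of the cross-caps making up $K$ — satisfies $\iota_*[\gamma]=k\delta$ with $k\ne0$.

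Choose an $\omega$-tame $J$ on $\overline{U_I}$ making $F_n,F_s$ holomorphic (and standard near them) and generic relative to $K$. By the structure theory of $S^2\times S^2$, the class $B$ cannot break: standard positivity of intersections with $F_n$, $F_s$ and with the $B$-fibres forces every component of a putative bubble to have non-negative $A$- and $B$-coefficients, whence $\sum(\text{$A$-coeff})=0$ and $\sum(\text{$B$-coeff})=1$ leave all but one component constant. Hence the $J$-holomorphic $B$-spheres foliate $\overline{U_I}$ — a two-parameter family of disjoint embedded spheres — with $F_n,F_s$ as sections; write $\Pi\colon\overline{U_I}\to S^2$ for the leaf-space projection, whose fibres over $U_I$ are cylinders of area $|I|$. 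Since $[K]\cdot B=1\bmod 2$, $K$ meets every leaf, and at each $p\in K$ the kernel of $d(\Pi|_K)$ is $T_pK\cap T_p(\text{leaf})$, at most a line because $T_pK$ is Lagrangian and the leaf is $J$-complex; so $\Pi|_K$ is a submersion off a fold locus which is non-empty (else $\Pi|_K$ would be a covering $K\to S^2$, impossible for a Klein bottle).

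The heart of the argument is to control where $\gamma$ sits: one wants $J$ (and, if necessary, an isotopy of $K$) for which the crosscap core $\gamma$ lies inside a single leaf $C_\star\cong S^2$, a sphere of area $|I|$. Granting this, $\iota_*[\gamma]\ne0$ means $\gamma$ is essential in $C_\star\cap U_I$, so as a Jordan curve it separates $C_\star$ into two discs $D^+\supset C_\star\cap F_n$ and $D^-\supset C_\star\cap F_s$, each $J$-holomorphic with boundary $\gamma\subset K$ and of positive area, with $\mathrm{area}(D^+)+\mathrm{area}(D^-)=|I|\le 1$. Passing to double branched covers $\tilde D^{\pm}\to D^{\pm}$ (holomorphically, as $C_\star$ is a holomorphic sphere) gives $J$-holomorphic discs on $K$ with boundary $\gamma$ traversed twice (boundary class $2[\gamma]$; the minimal Maslov number is the odd value $1$ because $\gamma$ is a cross-cap core), of areas $2\,\mathrm{area}(D^{\pm})$, meeting $F_n$ resp.\ $F_s$ with controlled multiplicity.

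Finally one plays these small holomorphic discs off against the area-$2$ sections $F_n,F_s$: gluing $\tilde D^+$, $\tilde D^-$ onto the orientation double cover of a Möbius neighbourhood of $\gamma$ in $K$ produces a singular closed surface whose symplectic area equals $2\,\mathrm{area}(D^+)+2\,\mathrm{area}(D^-)=2|I|$ (up to the $\ZZ/2$ bookkeeping forced by non-orientability), while its homology class is a non-negative combination of $\omega(A)=2$ and $\omega(B)=|I|$ dictated by its intersections with $F_n$, $F_s$ and a generic fibre; reconciling the two and using $|I|\le 1<2$ forces one of $\mathrm{area}(D^{\pm})$ to equal $|I|$, hence the other to vanish and $\gamma$ to collapse onto $F_n$ or $F_s$ — impossible, since $K\subset U_I$. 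I expect this last step, together with the rigorous version of ``$\gamma$ lies in one leaf'', to be the main obstacle: the genuine difficulties are the orientation bookkeeping for the non-orientable auxiliary surface, the role of the Maslov-$1$ discs, and pinning down precisely why the threshold is $|I|=1$ rather than something larger. It is exactly these cross-cap features that distinguish $K$ from the Lagrangian torus $\gamma_1\times\gamma_2$, for which the analogue of this configuration persists for every $|I|$.
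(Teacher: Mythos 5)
Your reduction to \(S^2\times S^2\) with factor areas \(2\) and \(|I|\), and the use of the Gromov foliation by holomorphic spheres in the small class \(B\), is a sensible opening, but the step you yourself flag --- arranging that the crosscap core \(\gamma\) lies inside a single holomorphic leaf \(C_\star\) --- is not a technical detail that can be patched: no genericity of \(J\), nor any isotopy of \(K\) you are allowed to perform (the statement is about an arbitrary Lagrangian embedding in the class \(\beta\), so you cannot normalise \(K\) to the standard model), makes a prescribed circle on a Lagrangian coincide with \(K\cap C_\star\). In general \(K\) meets each leaf in a collection of curves and tangencies over which you have no control, and everything downstream (the holomorphic discs \(D^\pm\) with boundary \(\gamma\), their branched double covers, the glued ``singular closed surface'' and its area identity) rests on this unavailable normal form. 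The final area bookkeeping is also not a computation as written: a surface obtained by gluing discs onto the orientation double cover of a M\"obius neighbourhood does not carry the integral homology class/area comparison you invoke, and nothing in the argument isolates the threshold \(|I|=1\), as you concede. Note also that you use the hypothesis \(\iota_*\neq 0\) only to make \(\gamma\) essential in a leaf, whereas it has to do much heavier lifting.

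For comparison, the paper's proof stretches the neck around \(K\) instead of working with a fixed tame \(J\): cap off \(U_{\bar I}\) by symplectic cuts at \(p=0,1\) to get \(S^2\times S^2\) with areas \(2,1\) and two holomorphic spheres \(S_0,S_1\) disjoint from \(K\); take Mohnke's almost complex structure \(J^-\) on \(T^*K\), for which every finite-energy cylinder meeting the zero-section is one of the explicit cylinders \(f_\gamma\) over closed geodesics, and there are no finite-energy planes in \(T^*K\) or \(\RR\times M\); then apply SFT compactness to the Gromov spheres in the minimal-area class \(\alpha=[\star\times S^2]\) through a point \(k\in K\) chosen off the cylinders \(f_\gamma\) with \(\gamma\) odd. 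The hypothesis \(\iota_*\neq 0\) enters via the long exact sequence of \((U_I,K)\): it forces every class in \(H_2(U_I,K;\ZZ)\) to have integer area, so since \(\alpha\) has area \(1\) the limit building can have no components in \(X\setminus K\) beyond two planes meeting \(S_0,S_1\) once each; the remaining components are cylinders, exactly one lies in \(T^*K\), and it must be \(f_\gamma\) for an odd geodesic by the \(\ZZ/2\)-intersection with \(K\) --- contradicting the choice of \(k\). If you want to salvage your route, you would need a rigidity statement for holomorphic objects through \(K\) playing the role of Mohnke's classification; a generic tame \(J\) on the closed manifold does not provide one.
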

\begin{Remark}
The proof of Theorem \ref{thm:klein} will occupy the rest of the
paper. It uses SFT and neck-stretching.

\end{Remark}
\begin{Remark}
Note that if \(|I|>1\) then \(H_1(L;\QQ)\to H_1(U_{I};\QQ)\) is an
isomorphism for the Lagrangian Klein bottle \(L\) coming from Lemma
\ref{lma:klein}. To see this, take either one of the circles living
over the points marked with cross-hairs in Figure
\ref{fig:kb_reprise}; this is a generator for both \(H_1(L;\QQ)\)
and \(H_1(U_I;\QQ)\). We deduce:

\end{Remark}
\begin{Corollary}
The Lagrangian Klein bottle in \(U_{\left(0,1+\epsilon\right)}\)
from Lemma \ref{lma:klein} cannot be squeezed into \(U_{(0,1)}\).

\end{Corollary}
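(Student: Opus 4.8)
The plan is to prove Theorem \ref{thm:klein} by a neck-stretching/SFT argument, so let me sketch the strategy I would follow.

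\textbf{Setup.} Suppose $\iota\colon K\to U_I$ is a Lagrangian Klein bottle in class $\beta$, with $|I|\le 1$, and assume for contradiction that $\iota_*\colon H_1(K;\QQ)\to H_1(U_I;\QQ)$ is nonzero. Recall $U_I = S^2\times C_I$ sits inside $(S^2\times S^2,\omega_{|I|})$ as the complement of the two spheres $S^2\times\{n\}$ and $S^2\times\{s\}$. The first thing I would do is compactify: view $K$ as a Lagrangian in $(S^2\times S^2,\omega_{|I|})$ disjoint from these two spheres, which is a non-negative symplectic sphere pair of self-intersection $0$. The key numerical input is that $|I|\le 1$, so the area of the ``vertical'' sphere class $\beta$ is at most $1$, while the ``horizontal'' class $[S^2\times\{\mathrm{pt}\}]$ has area $1$ as well (after the normalization $\int_{S^2}\sigma=2$; note $C_I$ has area $|I|$). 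I would choose an almost complex structure $J$ for which the two punctured-out spheres, and a generic pseudoholomorphic foliation of $S^2\times S^2$ by spheres in one of the rulings, are holomorphic, and then neck-stretch along the boundary of a Weinstein neighbourhood of $K$ — i.e.\ along the unit cotangent bundle $ST^*K$.

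\textbf{SFT compactness and the limit building.} Under neck-stretching, the foliating spheres (which have area $\le 1$ and which, for homological reasons, must intersect $K$) degenerate to holomorphic buildings: a top-level part in $S^2\times S^2\setminus K$ (an exact symplectic cobordism to nothing on the negative end), possibly some levels in the symplectization $\RR\times ST^*K$, and a bottom part in $T^*K$ filling off the negative punctures. The asymptotic Reeb orbits are (lifts of) geodesics on the flat — or rather, some — Klein bottle metric. Here the crucial feature of $K$ that I would exploit is that $K$ is \emph{nonorientable} with $H^2(K;\RR)=0$: there is no obstruction to exactness of pieces in $T^*K$, and more importantly the relevant closed geodesics have controlled index and action. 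I would run the standard index/area bookkeeping (as in the Shevchishin-type and Dimitroglou Rizell-type arguments): the sphere has Maslov-type contribution $2\cdot 2 = 4$ from its two intersections with... no — rather, a ruling sphere meeting $K$ in $k$ points limits to a building with $k$ negative punctures, whose total action is the sum of the lengths of the limiting geodesics, and this total is bounded by the area $\le 1$ of the sphere. Simultaneously the Fredholm index must be non-negative on every somewhere-injective component. The combination forces the limiting geodesics to be very short and low-index, and — this is the heart of the matter — it forces the ruling sphere to meet $K$ in a single point with the limiting orbit being a non-contractible geodesic representing the nonzero class $\iota_*[\gamma]$. That disc component in $T^*K$ asymptotic to a non-contractible geodesic on a Klein bottle is what we rule out.

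\textbf{The main obstacle and the contradiction.} The crux — and the step I expect to be genuinely delicate — is the analysis of holomorphic curves in $T^*K$ asymptotic to short geodesics, together with pinning down the index and the count. On a Klein bottle there is a one-parameter family of ``short'' non-contractible geodesics (the fibres of the $S^1$-bundle structure, in a suitable flat metric) sweeping out $K$, and these come in a Morse-Bott family; I would need the Morse-Bott version of SFT compactness, careful orientation/coherence conventions, and an index computation showing that a holomorphic plane in $T^*K$ asymptotic to such a geodesic would have negative index (or would produce, via the Gromov-compactified foliation, a singular foliation of $S^2\times S^2$ whose combinatorics are incompatible with $b_2 = 2$). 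The orientability subtlety — that $K$ is nonorientable, so a plane capping off the geodesic $\gamma$ would glue with the cobordism part into a sphere whose homology class meets $K$ once, contradicting $[\gamma]$ being $2$-torsion... no: rather, the obstruction is that such a plane cannot exist because the disc it would bound would have to have half-integral Maslov index, or because $\gamma$ bounds no disc in $T^*K$ at all since it is non-contractible in $K$ and $T^*K$ deformation retracts to $K$. That last observation is, I think, the cleanest route: a holomorphic plane in $T^*K$ is a disc, hence its boundary-at-infinity geodesic must be contractible in $K$; so the limiting geodesics on the negative ends are all contractible; but then they are nullhomologous, the foliating sphere caps them off to give a class meeting $K$ an \emph{even} number of times homologically compatible only with the ruling sphere being disjoint from $K$ after all — contradicting that $\beta\cdot[S^2\times\mathrm{pt}] = 1 \pmod 2$ forces odd intersection. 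Chasing this dichotomy to its end, under the area bound $|I|\le 1$, is precisely what yields $\iota_*=0$; and the honest difficulty is making the compactness-plus-index argument watertight enough to exclude bubbling, multiple covers, and the Morse-Bott degenerations that would otherwise leave room for $\iota_*\ne 0$.

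For the Corollary itself, once Theorem \ref{thm:klein} is in hand the argument is immediate and I would present it as follows: the Lagrangian Klein bottle $L\subset U_{(0,1+\epsilon)}$ of Lemma \ref{lma:klein} has $\iota_*\colon H_1(L;\QQ)\to H_1(U_{(0,1+\epsilon)};\QQ)$ an isomorphism, as noted in the preceding remark (a cross-cap core circle generates both sides). A symplectic embedding $U_{(0,1+\epsilon)}\hookrightarrow U_{(0,1)}$ — or, what is the same, a ``squeezing'' of $L$ into $U_{(0,1)}$ — would carry $L$ to a Lagrangian Klein bottle $L'\subset U_{(0,1)}$ in the class $\beta$ with $\iota'_*$ still nonzero on $H_1$, since $H_1(U_{(0,1+\epsilon)};\QQ)\to H_1(U_{(0,1)};\QQ)$ is an isomorphism (both are $\QQ$, generated by the $S^1$-factor of the cylinder, and the inclusion respects it). But $|I|=1\le 1$, so Theorem \ref{thm:klein} forces $\iota'_*=0$, a contradiction. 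Hence no such squeezing exists.
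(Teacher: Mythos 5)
Your deduction of the Corollary is correct and is essentially the paper's own: the visible Klein bottle of Lemma \ref{lma:klein} has \(\iota_*\) an isomorphism on rational \(H_1\), so Theorem \ref{thm:klein} with \(|I|=1\) rules out any Lagrangian Klein bottle in \(U_{(0,1)}\) smoothly isotopic to it (the lengthy sketch of the proof of Theorem \ref{thm:klein} is not needed here, as the theorem can simply be quoted). One small caveat: ``squeezing'' should be read as isotoping/re-embedding the Lagrangian into \(U_{(0,1)}\subset U_{(0,1+\epsilon)}\), not as a symplectic embedding of all of \(U_{(0,1+\epsilon)}\) into \(U_{(0,1)}\) (which is already impossible for volume reasons and is not ``the same''); your actual homological argument, which uses the inclusion-induced isomorphisms on \(H_1(\,\cdot\,;\QQ)\) and on \(H_2(\,\cdot\,;\ZZ/2)\) to keep \(\iota'_*\neq 0\) and \([L']=\beta\), is the right one.
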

\begin{Remark}
To reduce Conjecture \ref{conj:kb} to this result, you would need to
produce a pair of symplectic spheres in the class \([S^2\times
\star]\) which ``link'' your Lagrangian Klein bottle in an
appropriate way. Since this class has non-minimal symplectic area,
it is difficult to control the SFT limit of such spheres.

\end{Remark}
We now proceed to the proof of Theorem \ref{thm:klein}.

\subsection{Mohnke's almost complex structure}

Pick a flat metric \(g\) on the Klein bottle. There is a contact form
(the canonical 1-form) on the unit cotangent bundle \(M\subset T^*K\)
whose closed Reeb orbits correspond to closed geodesics on \(K\). We
will not distinguish notationally between geodesics and the
corresponding Reeb orbits and we will write \(-\gamma\) for the
geodesic obtained by reversing \(\gamma\). There are two isolated
simple geodesics \(\gamma_0,\gamma_1\) which are the core circles for
two disjoint embedded M\"{o}bius strips in \(K\). Any isolated
geodesic is a multiple cover of one of these and all other geodesics
occur in one-parameter families. We call the isolated geodesics {\em
odd} and the other geodesics {\em even}.

\begin{Theorem}[Mohnke {\cite[Section 2.1]{Mohnke}}]\label{thm:mohnke}
There exists an almost complex structure \(J^-\) on the cotangent
bundle \(T^*K\) with the following properties:

\begin{enumerate}
\item \(J^-\) is cylindrical at infinity and suitable for
neck-stretching.

\item For any geodesic \(\gamma\) there is a finite-energy
\(J^-\)-holomorphic cylinder \(f_\gamma\) in \(T^*K\) asymptotic
to \(\gamma\) and \(-\gamma\).

\item {\cite[Lemma 7(2)]{Mohnke}} Any \(J^-\)-holomorphic cylinder in
\(T^*K\) which intersects the zero-section is one of these
\(f_\gamma\) for some closed geodesic \(\gamma\).

\end{enumerate}
\end{Theorem}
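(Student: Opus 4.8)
The plan is to follow Mohnke's construction [Mohnke, §2.1]: build $J^-$ from an \emph{integrable} complex structure on a neighbourhood of the zero-section coming from a quotient presentation of $T^*K$, graft a cylindrical end onto it, exhibit the cylinders $f_\gamma$ explicitly in that model, and prove the uniqueness statement (3) by positivity of intersections against the $f_\gamma$ themselves. \textbf{Step 1 — the model.} Write the flat Klein bottle as $K=\RR^2/\Gamma$, where $\Gamma=\langle a,b\rangle$ with $a\colon(x,y)\mapsto(x+1,y)$ and the glide reflection $b\colon(x,y)\mapsto(-x,y+1)$. The orientation double cover is the rectangular torus $\tilde K=\RR^2/\langle(1,0),(0,2)\rangle$, with deck transformation induced by $b$. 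Identifying $T^*\RR^2\cong\CC^2$ by $z_1=x+i\xi$, $z_2=y+i\eta$ turns $T^*\tilde K$ into $(\CC^*)^2$ via $w_1=e^{2\pi iz_1}$, $w_2=e^{\pi iz_2}$, and the cotangent lift of $b$ becomes the \emph{free holomorphic} involution $\tau(w_1,w_2)=(w_1^{-1},-w_2)$. Hence $(\CC^*)^2/\tau$ is $T^*K$ with its canonical symplectic form together with an induced integrable complex structure $J_0$ on a large disc bundle $D^*_RK$ (the eventual $J^-$ will agree with $J_0$ there); the zero-section $K=\{|w_1|=|w_2|=1\}/\tau$ is the fixed locus of the anti-holomorphic involution $c$ induced by complex conjugation, and the $\tau$-invariant functions $V=w_1+w_1^{-1}$, $W=w_2^2$, $P=(w_1-w_1^{-1})w_2$ present $(\CC^*)^2/\tau$ as the affine surface $\{P^2=(V^2-4)W\}$.

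\textbf{Step 2 — the cylinders.} A closed geodesic $\gamma$ on $K$ lifts to $\tilde K$ in some rational direction $(m,n)$ (the odd geodesics $\gamma_0,\gamma_1$ being the direction-$(0,1)$ circles over $x\in\{0,\tfrac12\}$). For $c\in\RR$ the complex line $\{nz_1-mz_2=c\}\subset\CC^2$ descends to a holomorphic cylinder in $T^*\tilde K$ whose real part is that geodesic; pushing this (together with its $\tau$-translate when $m\ne0$) down to $T^*K$, and passing to a $d$-fold cover for a $d$-fold iterate, produces $f_\gamma$ — embedded when $\gamma$ is embedded, and immersed along the self-crossings of $\gamma$ otherwise. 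Its two ends occur where the line parameter has imaginary part tending to $\pm\infty$, i.e.\ where the momentum runs to $\pm\infty$ along $\gamma$'s velocity, so they are asymptotic to $\gamma$ and to $-\gamma$, and $f_\gamma$ meets $K$ exactly along $\gamma$. This gives (2). It also gives, for each fixed primitive $(m,n)$, a holomorphic foliation of the model by the cylinders $\{nz_1-mz_2=c\}$, $c\in\CC$.

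\textbf{Step 3 — uniqueness.} Arrange (Step 4) that each $f_\gamma$ stays $J^-$-holomorphic on all of $T^*K$. Let $u$ be a finite-energy $J^-$-holomorphic cylinder, asymptotic to Reeb orbits (closed geodesics) at its two ends, with $u^{-1}(K)\ne\emptyset$. Since $u$ meets $K$ and $K$ is the union of the real loci of all the $f_\gamma$, for a suitably chosen $\gamma$ the curve $f_\gamma$ passes through a point of $u\cap K$, so $u\cdot f_\gamma>0$ unless $u$ and $f_\gamma$ share a component; and because positivity of intersections holds for \emph{any} tame almost complex structure in dimension $4$, all local intersections of $u$ with the members of a whole slope-$(m,n)$ family are nonnegative while their total is a fixed number read off from the Reeb orbits $u$ is asymptotic to. Playing the families of two different slopes against one another — and, where convenient, lifting to $T^*\tilde K$ and applying Schwarz reflection across $K$ using $c$ to turn the part of $u$ near $K$ into a half-cylinder with boundary on $K$ inside the disc bundle — forces this bookkeeping to close up only when $u$ coincides, after reparametrisation, with one of the $f_\gamma$. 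This is the content of [Mohnke, Lemma 7(2)].

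\textbf{Main obstacle.} The real work is two-fold. First, interpolating $J_0$ near fibrewise infinity to a translation-invariant, SFT-admissible $J^-$ on the complement of a small disc bundle while (i) keeping every $f_\gamma$ holomorphic — possible because in the model the $f_\gamma$ are affine, so one can bend them to be literal Reeb cylinders outside a compact set — and (ii) keeping the interpolation equivariant for $\tau$ and for $c$. Second, making Step 3 airtight, the danger being a stray $J^-$-holomorphic cylinder that meets $K$, wanders into the non-integrable collar, and returns; this is excluded by a monotonicity/area estimate for the portion of $u$ near $K$ together with an exhausting plurisubharmonic function — fibre-norm squared — whose sublevel sets one keeps pseudoconvex throughout the interpolation. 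Everything else is the explicit geometry of the quotient $(\CC^*)^2/\tau$ above.
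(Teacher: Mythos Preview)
The paper does not prove this theorem at all: it is stated with attribution to Mohnke (with specific citations for each clause) and used as a black box in the subsequent SFT analysis. There is therefore no proof in the paper to compare your proposal against.

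Regarded instead as a reconstruction of Mohnke's argument, your Steps~1 and~2 are accurate: the presentation of \(T^*K\) as \((\CC^*)^2/\tau\) with \(\tau(w_1,w_2)=(w_1^{-1},-w_2)\), and the explicit affine-line cylinders, are exactly Mohnke's model. Your Step~3, however, is not yet a proof. The sentence ``playing the families of two different slopes against one another\ldots forces this bookkeeping to close up only when \(u\) coincides with one of the \(f_\gamma\)'' gestures at an intersection-number argument but does not carry it out; in particular you have not said what the homological intersection number of \(u\) with a slope-\((m,n)\) leaf actually is in terms of \(u\)'s asymptotics, nor why two such constraints pin down \(u\) rather than merely its homology class. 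Likewise the ``main obstacle'' paragraph correctly identifies the interpolation problem but does not resolve it: asserting that ``one can bend them to be literal Reeb cylinders outside a compact set'' for \emph{every} \(f_\gamma\) simultaneously (there are infinitely many slopes) is exactly the delicate point. If your aim is to supply a self-contained proof, these two places need genuine arguments; if your aim matches the paper's, a citation suffices.
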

\begin{Remark}\label{rmk:intersection}
If we let \(W:=\overline{T^*K}\) denote the compactification of the
cotangent bundle obtained by gluing on its ideal contact boundary
\(M\) then there is a well-defined intersection pairing
\(H_2(W,M;\ZZ/2)\otimes H_2(W;\ZZ/2)\to\ZZ/2\). The cylinders
\(f_\gamma\) define elements of \(H_2(W,M;\ZZ/2)\) and we have
{\cite[Lemma 7(3)]{Mohnke}} \[f_\gamma\cdot K=\begin{cases} 1\mbox{
if }\gamma\mbox{ is odd}\\ 0\mbox{ if }\gamma\mbox{ is
even.}\end{cases}\]

\end{Remark}
\begin{Remark}[{\cite[Lemma 7(1)]{Mohnke}}]\label{rmk:noplanes}
Note that there are also no finite energy planes in \(T^*K\), nor in
the symplectisation \(\RR\times M\), for {\em any} cylindrical
almost complex structure adapted to our chosen contact form. This is
because there are no contractible Reeb orbits, and a finite energy
plane would provide a nullhomotopy of its asymptote.

\end{Remark}
\subsection{Neck-stretching}

Let \(I=(0,1)\) and \(\bar{I}=[0,1]\). Suppose there is a Lagrangian
Klein bottle \(K\subset U_{I}\) such that \(\QQ=H_1(K;\QQ)\to
H_1(U_{I};\QQ)=\QQ\) is nonzero (in particular, it is
injective). Think of \(K\) sitting inside \(U_{\bar{I}}\) and make
symplectic cuts to \(U_{\bar{I}}\) at \(p=0,1\) to obtain a Lagrangian
Klein bottle \(K\) living in the manifold \(X=S^2\times S^2\) equipped
with the product symplectic form giving the factors areas \(2\) and
\(1\) respectively. Crucially, the symplectic cut introduces
symplectic spheres \(S_0\) and \(S_1\) (at the \(p=0,1\) cuts
respectively) which are disjoint from \(K\).

Pick a sequence of almost complex structures \(J_t\), \(t\in\RR\), on
\(X\) with the following properties:
\begin{itemize}
\item on a Weinstein neighbourhood of \(K\), \(J_t\) coincides with
Mohnke's almost complex structure \(J^-\);
\item on a neck-stretching region \((a_t,b_t)\times M\) around \(K\),
\(J_t\) is a neck-stretching sequence;
\item the spheres \(S_0,S_1\) are \(J_t\)-holomorphic for all
\(t\in\RR\).
\end{itemize}
Pick a point \(k\) on \(K\) which does not lie on any of the cylinders
\(f_{\gamma}\) for an odd geodesic \(\gamma\). Let \(u_t\colon S^2\to
X\) be a \(J_t\)-holomorphic curve representing the class
\(\alpha=[\star\times S^2]\) and such that \(u_t(0)=k\); there is a
unique such \(u_t\) up to reparametrisation by a theorem of Gromov
{\cite[2.4.C]{Gromov}}, since \(\alpha\) is a minimal area sphere
class in \(X\).

By the SFT compactness theorem \cite{BEHWZ}, there is a sequence
\(t_i\) such that \(u_{t_i}\) converges (after reparametrisations) to
a holomorphic building with components in \(T^*K\) (the completion of
the Weinstein neighbourhood of \(K\)), components in \(\RR\times M\)
(the completion of the neck) and components in \(X\setminus K\) (the
completion of the complement of the Weinstein neighbourhood).

\subsection{SFT limit analysis}

The components \(v_1,\ldots,v_n\) of the SFT limit building living in
\(X\setminus K\) can be compactified, yielding topological surfaces in
\(X\) with boundary on \(K\); we will still denote these by
\(v_1,\ldots,v_n\). The sum of the \(\omega\)-areas of the \(v_i\)
(weighted by multiplicities if the SFT limit involves a branched
cover) equals the \(\omega\)-area of \(\alpha\), which is \(1\).

\begin{Lemma}\label{lma:atleasttwoplanes}
There must be at least two planar components amongst the \(v_i\),
possibly geometrically indistinct (i.e. having the same image).
\end{Lemma}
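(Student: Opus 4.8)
The plan is to exploit the homological constraints coming from the SFT building together with the three key facts we have available: (i) there are no finite-energy planes in $T^*K$ or in the symplectisation $\RR\times M$ (Remark 1.18), so every planar component of the building must live in $X\setminus K$; (ii) the point $k=u_t(0)$ lies on $K$, and the limit building must still "pass through" $k$; and (iii) the sphere $u_t$ has a single positive intersection with each of $S_0$ and $S_1$ (since $\alpha\cdot[S_0]=\alpha\cdot[S_1]=1$ in $X=S^2\times S^2$), and these intersections survive in the limit. The broad strategy is to first argue that \emph{at least one} planar component occurs, then upgrade this to \emph{two} using the fact that $K$ is nonorientable, so that a single disc with boundary on $K$ cannot "carry" the necessary homological data, by a parity/$\ZZ/2$-intersection argument with the Mohnke cylinders $f_\gamma$.

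First I would record the structure of the limit. The building consists of: top-level components in $X\setminus K$ (a punctured sphere configuration, each with punctures asymptotic to Reeb orbits on $M$), neck components in $\RR\times M$, and bottom-level components in $T^*K$. By Remark 1.18 there are no planes at the bottom two levels, so any planar piece is one of the $v_i$. Since the total arithmetic genus is zero (the domain is $S^2$) and the building is connected, the dual graph of the building is a tree, and the bottom-level curves in $T^*K$ must be cylinders or higher-punctured surfaces asymptotic to Reeb orbits. Here I would invoke Theorem 1.16: any $J^-$-holomorphic cylinder in $T^*K$ meeting the zero-section is some $f_\gamma$. But because $k$ was chosen off every $f_\gamma$ for $\gamma$ odd, and the building passes through $k$, the component through $k$ in $T^*K$ either is an $f_\gamma$ for $\gamma$ \emph{even} (so homologically trivial against $K$, by Remark 1.17) or does not meet the zero-section at all. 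This is the delicate bookkeeping step.

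Next, the counting. The class $\alpha$ has $\omega$-area $1$, and in $X=S^2\times S^2$ we have $\alpha\cdot[S_0]=\alpha\cdot[S_1]=1$. The spheres $S_0,S_1$ are $J_t$-holomorphic and disjoint from $K$, hence live entirely in $X\setminus K$; by positivity of intersection, the two intersection points $u_t\cap S_0$ and $u_t\cap S_1$ persist in the limit and must lie on top-level components $v_i$. If there were only one planar $v_i$, that plane would be the unique top-level vertex of the tree carrying all "outside" homology; I would then compute its class in $H_2(W,M;\ZZ/2)$ (equivalently, cap off its asymptotic Reeb orbits) and intersect with $[K]\in H_2(W;\ZZ/2)$ using Remark 1.17. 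The obstruction to be derived is a contradiction of the form: a single capped plane, together with the $f_\gamma$ pieces, would force $\alpha\cdot K\equiv 1\pmod 2$, whereas $\alpha\cdot K=0$ because $\alpha$ is an integral class disjoint from $K$ after a generic perturbation (or: $[K]=\beta$ and $\alpha\cdot\beta=0$ in $H_*(U_I)$, which was part of the setup). Two planar components are needed precisely to realise the \emph{even} total intersection with $K$: each plane contributes oddly (it is asymptotic to an odd Reeb orbit if it is to "connect" to a $\gamma_0$- or $\gamma_1$-type cylinder at the next level, by the structure of the boundary of the Weinstein neighbourhood of a Klein bottle), so an even number — hence at least two — is forced.

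The main obstacle I anticipate is the second half: cleanly ruling out the "one plane plus complicated non-planar outside components" scenario. A priori the building could have one genuine plane together with a top-level component of higher punctured genus-zero type whose multiple boundary punctures conspire to fix the parity; I would need the tree structure of the building, the minimality of $\omega(\alpha)=1$ (which severely limits how much area is available to split among components — in particular at most one component can have positive $\omega$-area if the Reeb orbits have controlled action, and the rest are "trivial cylinders" or live in $X\setminus K$ with zero area, which is impossible for a somewhere-injective curve), and the fact that $K$ is nonorientable (so it has \emph{no} non-trivial $\ZZ/2$-class pairing to contradict), to pin things down. Quantitatively, the cleanest route is probably: the total area being $1$ and each top-level component having positive area forces there to be exactly one top-level component with positive area and any additional top-level components to be "thin" — but a planar component with boundary on $K$ and zero symplectic area would have to be constant, contradicting that it passes through a non-constant configuration; so the real content is a careful accounting of \emph{where} the area $1$ goes, combined with the $\ZZ/2$-intersection parity with $f_\gamma$'s, and that is the step I expect to require the most care.
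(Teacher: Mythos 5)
Your proposal misses the short topological argument that the lemma actually rests on, and the replacement parity argument you sketch would not close. The paper's proof is: the limit building passes through \(k\in K\), so it has a component in the \(T^*K\) level; it also must have at least one component in \(X\setminus K\), because \(T^*K\) is exact and so contains no closed holomorphic curves; hence the building has at least two levels, and a genus-zero building with at least two levels automatically has at least two planar components (cutting \(S^2\) along a nonempty collection of disjoint circles produces at least two discs -- the leaves of the tree you mention), possibly with the same image. By Remark \ref{rmk:noplanes} any such plane must lie in \(X\setminus K\). That is the whole proof: no intersection numbers with \(S_0,S_1\), no area accounting, and no \(\ZZ/2\)-parity with the Mohnke cylinders are needed at this stage. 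You list the right ingredients (tree structure of the building, absence of planes in \(T^*K\) and \(\RR\times M\)) but never draw the elementary conclusion that two levels force two leaves.

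The concrete gap in your alternative route is that your intended contradiction rests on the claim \(\alpha\cdot K=0\pmod 2\) (``\([K]=\beta\) and \(\alpha\cdot\beta=0\)''). This is false: \([K]=\beta\) is the class of the area-\(2\) sphere \(S^2\times\star\) mod \(2\), while \(\alpha=[\star\times S^2]\), so \(\alpha\cdot K\) is odd -- indeed the paper uses exactly this oddness in the \emph{next} lemma to force an \(f_\gamma\) with \(\gamma\) odd into the limit. So the parity obstruction you hope to derive from ``a single capped plane'' is not an obstruction, and the assertion that each plane ``contributes oddly'' to the intersection with \(K\) is unjustified: Remark \ref{rmk:intersection} computes \(f_\gamma\cdot K\) only for the Mohnke cylinders, not for arbitrary components of the building compactified to surfaces with boundary on \(K\). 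The auxiliary claim that at most one top-level component can have positive \(\omega\)-area is also unsupported (every nonconstant component in \(X\setminus K\) has positive area); the area bound \(\omega(\alpha)=1\) is genuinely used, but only later, in the proof that there are exactly two components in \(X\setminus K\), where it is combined with the integrality of areas of classes in \(H_2(U_I,K;\ZZ)\).
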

\begin{proof}
First note that the limit building intersects \(K\) because
\(u_t(0)=k\in K\) for all \(t\). It also necessarily has at least
one component in \(X\setminus K\) because \(T^*K\) is exact and so
contains no closed holomorphic curves. A genus zero holomorphic
building with at least two levels must have two planar components
(just for topological reasons) though these could be geometrically
indistinct. Any planar components live in \(X\setminus K\). \qedhere

\end{proof}
\begin{Lemma}\label{lma:exactlytwoplanes}
There are two components \(v_0,v_1\) of the limit building such that
\(v_i\cdot S_j=\delta_{ij}\). These components are planar and there
are no further components of the limit building in \(X\setminus L\).
\end{Lemma}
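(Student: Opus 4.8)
The plan is to identify \(v_0,v_1\) by intersecting the limit building against \(S_0\) and \(S_1\), and then to show that no component of the building can avoid both of these spheres. Recall that in \(X=S^2\times S^2\) the class \(\alpha=[\star\times S^2]\) has \(\omega\)-area \(1\), while \(S_0,S_1\) lie in \([S^2\times\star]\), of \(\omega\)-area \(2\).

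For the first step, note that \(S_0,S_1\) are \(J_t\)-holomorphic for every \(t\) and disjoint from \(K\), hence in the limit they are \(J\)-holomorphic spheres in \(X\setminus K\) still disjoint from \(K\), while the components of the building in \(\RR\times M\) and in \(T^*K\) are confined to (the completion of) a neighbourhood of \(K\) and so miss \(S_0\cup S_1\). As the building is homologous to \(\alpha\) and \(\alpha\cdot[S^2\times\star]=1\), summing intersection numbers over the components \(v_1,\dots,v_n\) in \(X\setminus K\) gives \(\sum_i v_i\cdot S_0=\sum_i v_i\cdot S_1=1\). Each summand is \(\geq 0\) by positivity of intersection — no \(v_i\) is a multiple cover of \(S_0\) or \(S_1\), since such a cover would use up \(\omega\)-area \(\geq 2\) whereas the whole building has area \(1\). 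So exactly one component, call it \(v_0\), has \(v_0\cdot S_0=1\), exactly one, call it \(v_1\), has \(v_1\cdot S_1=1\), and every other component in \(X\setminus K\) is disjoint from \(S_0\cup S_1\), i.e.\ has image in \(U_I\setminus K\).

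The crux is that no component has image in \(U_I\setminus K\). Here I would exploit the product structure: choose the \(J_t\) to agree with the split almost complex structure \(J_{S^2}\oplus j_{C_I}\) on \(U_I\) outside a small fixed neighbourhood of \(K\) (this is consistent with making \(S_0,S_1\) holomorphic, as they are fibre spheres \(S^2\times\{\mathrm{pt}\}\), and with the prescriptions near \(K\)). For a component \(w\) with image in \(U_I\setminus K\), the projection \(\pi\circ w\) with \(\pi\colon U_I=S^2\times C_I\to C_I\) is holomorphic wherever \(J_t\) is split, hence on all but a small part of the domain of \(w\); controlling \(\pi\circ w\) on the remaining part — where \(w\) runs down the neck toward \(K\) — and then applying a maximum‑principle/removable‑singularity argument should force \(\pi\circ w\) to be constant, so that \(w\) lies in a single fibre sphere \(S^2\times\{q\}\); but a non‑constant holomorphic curve in that sphere with punctures, and of \(\omega\)-area at most \(1<2\), cannot exist (constant ghost components, if they occur, I would contract). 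This confinement step is where I expect the real difficulty to lie, since one has to arrange the Weinstein neighbourhood, the stretched neck, and the split structure on \(U_I\) to interact well enough for the projection argument to close up.

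Granting this, every component of the building in \(X\setminus K\) meets \(S_0\) or \(S_1\); by the uniqueness established above, the only such components are \(v_0\) and \(v_1\). They must be distinct, for if \(v_0=v_1\) then \(X\setminus K\) would contain a single component of the building, contradicting Lemma \ref{lma:atleasttwoplanes}; hence \(v_i\cdot S_j=\delta_{ij}\) and \(X\setminus K\) contains exactly the two components \(v_0,v_1\). Since, again by Lemma \ref{lma:atleasttwoplanes}, at least two of the building's components are planar and all of them lie in \(X\setminus K\), these two components are precisely \(v_0\) and \(v_1\), so \(v_0\) and \(v_1\) are planar and there are no further components of the building in \(X\setminus K\).
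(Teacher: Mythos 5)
Your first and last steps (using positivity of intersection with \(S_0,S_1\) to isolate \(v_0,v_1\), and then invoking Lemma \ref{lma:atleasttwoplanes} to rule out the case of a single component hitting both spheres and to conclude the two components are planes) agree with the paper. The genuine gap is exactly the step you flag as difficult: ruling out a component whose image lies in \(U_I\setminus K\), away from \(S_0\cup S_1\). Your projection idea does not close up. The map \(\pi\circ w\) is only holomorphic on the part of the domain where \(J_t\) is split, and the region where it fails to be split is precisely where \(w\) runs into the (Weinstein neighbourhood of the) Klein bottle; but an arbitrary Lagrangian Klein bottle in class \(\beta\) -- indeed even the visible one -- can project onto essentially all of \(C_I\), so there is no localisation of the non-holomorphic part of \(\pi\circ w\) in the \(C_I\) direction, no boundary control, and hence no maximum principle or removable-singularity argument to force \(\pi\circ w\) to be constant. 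A structural warning sign is that your argument nowhere uses the standing hypothesis (assumed for contradiction) that \(\iota_*\colon H_1(K;\QQ)\to H_1(U_I;\QQ)\) is nonzero; if the confinement step worked without it, the whole proof would show that \emph{no} Lagrangian Klein bottle in class \(\beta\) exists in \(U_I\) for \(|I|\leq 1\), a strictly stronger statement than Theorem \ref{thm:klein}, which the author pointedly does not claim (compare Conjecture \ref{conj:kb}).

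The paper's actual argument for this step is homological, and this lemma is precisely where the hypothesis enters. Assuming \(H_1(K;\QQ)\to H_1(U_I;\QQ)\) is injective, the kernel of \(H_1(K;\ZZ)\to H_1(U_I;\ZZ)\) is the torsion \(\ZZ/2\), so the long exact sequence of the pair splits off \(\ZZ\to H_2(U_I,K;\ZZ)\to\ZZ/2\to 0\); consequently every class in \(H_2(U_I,K;\ZZ)\) has \(\omega\)-area a half-integer multiple of the area of the generator \(\beta\), which is \(2\), i.e.\ an integer. A hypothetical component \(v_2\) disjoint from \(S_0\cup S_1\) defines such a relative class, yet must have area strictly between \(0\) and \(1\) (the total area of the building is \(1\) and the component meeting the spheres already has positive area), which is impossible. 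You should replace your projection sketch with this quantisation-of-area argument, or supply an entirely different mechanism that genuinely uses the injectivity hypothesis.
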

\begin{proof}
Since \(\alpha\) intersects \(S_0\) and \(S_1\) there must be
components of the limit building which intersect \(S_0\) and
\(S_1\). By positivity of intersections, either:

\begin{enumerate}
\item [(A)] there is one component \(v_1\) which hits both \(S_0\) and
\(S_1\) once transversely and all other components are disjoint
from \(S_0,S_1\).

\item [(B)] there are two components \(v_0,v_1\) such that \(v_0\)
intersects \(S_0\) once transversely and is disjoint from \(S_1\)
and vice versa for \(v_1\).

\end{enumerate}
Moreover, each of these components occurs with multiplicity one in
the SFT limit in order to get the correct intersection numbers
\(\alpha\cdot S_0,\alpha\cdot S_1\).

If \(v_2\) is a component which does not intersect \(S_0\) or
\(S_1\) then it defines a class in \(H_2(U_{I},K;\ZZ)\). By
assumption, the kernel of the map \(\ZZ\oplus\ZZ/2=H_1(K;\ZZ)\to
H_1(U_{I};\ZZ)=\ZZ\) is precisely the torsion part. Therefore the
long exact sequence \[\cdots\to H_2(U_{I};\ZZ)\to
H_2(U_{I},K;\ZZ)\to H_1(K;\ZZ)\to H_1(U_I;\ZZ)\to\cdots\] splits off
a sequence \[\cdots\to\ZZ\to H_2(U_{I},K;\ZZ)\to\ZZ/2\to 0.\] This
implies that the areas of classes in \(H_2(U_{I},K;\ZZ)\) are
half-integer multiples of the area of the generator \(\beta\in
H_2(U_{I};\ZZ)\), which is \(2\). Therefore \(v_2\) has integer
area.

The area of \(\alpha\) is \(1\), so the (weighted) sum of the
\(\omega\) areas of the \(v_i\) equals 1. Since \(v_1\) has positive
area, \(v_2\) must have positive area strictly less than \(1\), but this is
not possible if \(v_2\) has integer area. Therefore there cannot be
any component \(v_2\) disjoint from \(S_0\) and \(S_1\).

By Lemma \ref{lma:atleasttwoplanes}, there are at least two planar
components (or one planar component with multiplicity two) in the
limit building. This is not compatible with Case (A), so we must be
in Case (B) and \(v_0,v_1\) must additionally be planes. \qedhere

\end{proof}
\begin{Lemma}
\begin{enumerate}
\item All the remaining parts of the limit building are cylinders.

\item At least one of these cylinders lives in \(T^*K\) and has the form
\(f_{\gamma}\) for an odd geodesic \(\gamma\).

\item There are no other cylindrical components of the SFT limit
building in \(T^*K\).
\end{enumerate}
\end{Lemma}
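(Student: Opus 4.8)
The plan is to analyze the topology and areas of the remaining components of the SFT limit building, using the structure already established: we have two planar components $v_0, v_1$ in $X\setminus K$ hitting $S_0, S_1$ respectively, and the total $\omega$-area of the building is $1$. First I would observe that since the original curve $u_t$ has genus zero and the building is connected (it is a limit of connected curves), the only way to build a genus-zero surface out of planes and higher-genus pieces is for all non-planar components to be cylinders (sphere components glued to the rest would have to carry no asymptotics and hence be closed, but there are no closed holomorphic curves in $T^*K$ or the symplectization $\RR\times M$, and in $X\setminus K$ a closed component would consume extra area). This handles item (1): a genus-zero building with exactly two planar components must have all other components of genus zero with two punctures, i.e. cylinders.

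For item (2), the key point is the marked point. We chose $k\in K$ not lying on any $f_\gamma$ for an odd geodesic, and $u_t(0)=k$, so in the limit $k$ lies on some component of the building. Since $k\in K$ (the zero-section of $T^*K$) and the planar components $v_0, v_1$ live in $X\setminus K$, the component carrying the limit of the marked point—or more precisely, some component meeting $K$—must live in $T^*K$. Any holomorphic cylinder in $T^*K$ meeting the zero section is, by Theorem \ref{thm:mohnke}(3), of the form $f_\gamma$ for some closed geodesic $\gamma$. So I would argue there is at least one such $f_\gamma$ in the building. To see $\gamma$ must be odd: I would use the intersection-number computation of Lemma \ref{lma:exactlytwoplanes} together with Remark \ref{rmk:intersection}. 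The total homological intersection of the building with $K$ equals $\alpha\cdot K$; since $K$ is Lagrangian, $\alpha\cdot K$ is determined by the topology of $U_I$ and the hypothesis that $H_1(K)\to H_1(U_I)$ is injective. The even $f_\gamma$ contribute $0$ to intersection with $K$ (Remark \ref{rmk:intersection}) and the planes $v_0, v_1$ lie in $X\setminus K$; only odd cylinders contribute $1$ each. Matching this against the required nonzero total forces at least one odd $f_\gamma$ to appear—and this is the step I expect to be the main obstacle, since I need to pin down exactly what $\alpha\cdot K$ is mod $2$ and verify it is nonzero under the standing assumption, which requires carefully tracking the symplectic-cut construction and the class $\beta$.

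For item (3), suppose there were a second cylindrical component in $T^*K$. If it meets the zero-section it is again some $f_{\gamma'}$; if it does not, then (being a cylinder in the exact manifold $T^*K$ with both ends on Reeb orbits) it still costs positive action/area. I would run the area accounting: each $f_\gamma$ in $T^*K$ has an action determined by the length of $\gamma$, and the neck components in $\RR\times M$ also carry nonnegative area, while $v_0$ and $v_1$ each have positive $\omega$-area and together with everything else must sum to exactly $1$. The half-integrality argument from Lemma \ref{lma:exactlytwoplanes} constrains $v_0 + v_1$ and any further pieces so tightly that a second $T^*K$-cylinder cannot fit without overshooting area $1$ or forcing one of $v_0, v_1$ to have non-positive area. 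Thus the odd cylinder $f_\gamma$ is unique among the $T^*K$-components. Throughout, I would lean on Remark \ref{rmk:noplanes} to rule out finite-energy planes in $T^*K$ and in $\RR\times M$, which is what makes the planar components live in $X\setminus K$ and forces the rigid cylindrical structure everywhere else.
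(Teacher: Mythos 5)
Your parts (1) and (2) are essentially the paper's argument. For (1) you count punctures against the fact that there are exactly two planar components, both in \(X\setminus K\) (Lemma \ref{lma:exactlytwoplanes} and Remark \ref{rmk:noplanes}), exactly as the paper does. For (2), the step you flag as the ``main obstacle'' is actually immediate and does not involve the \(H_1\)-injectivity hypothesis at all: the theorem assumes \([K]=\beta\), and \(\beta\) maps to \([S^2\times\star]\in H_2(X;\ZZ/2)\) under the symplectic cut, so \(\alpha\cdot K=[\star\times S^2]\cdot[S^2\times\star]=1\) mod \(2\). Since \(v_0,v_1\) are disjoint from \(K\) and even cylinders contribute \(0\) to the intersection with \(K\) (Remark \ref{rmk:intersection}), at least one odd \(f_\gamma\) must occur; your outline for this part is otherwise the paper's.

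The genuine gap is in (3). Your area-accounting strategy does not work: the half-integrality argument of Lemma \ref{lma:exactlytwoplanes} applies to classes in \(H_2(U_I,K;\ZZ)\) carried by the compactified components in \(X\setminus K\), not to pieces in \(T^*K\) or \(\RR\times M\), and those pieces carry no fixed area cost --- relative to \(K\), where the Liouville primitive vanishes (and with the Weinstein neighbourhood as small as we like), an extra cylinder \(f_{\gamma'}\) consumes essentially no \(\omega\)-area, so nothing ``overshoots'' \(1\). What actually excludes a second \(T^*K\)-cylinder is the structure of a genus-zero building: it is a chain of two planes joined by cylinders, and both planes sit in the top level \(X\setminus K\). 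If two cylinders of the chain lay in the bottom level \(T^*K\), their positive asymptotes point up into the neck, so some component between them would have to turn the chain around, i.e.\ have two negative ends and no positive end. It cannot lie in \(X\setminus K\), since by Lemma \ref{lma:exactlytwoplanes} there are no components there besides \(v_0,v_1\); hence it would be a cylinder in \(\RR\times M\) with no positive asymptote, which is impossible by the maximum principle. This structural step is the missing idea in your proposal, and it is how the paper proves (3).
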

\begin{proof}
\begin{enumerate}
\item If a component has three or more punctures then the limit building
must contain at least three planar components (counted with
multiplicity) but we have seen that all the planar components must
live in \(X\setminus K\) (Remark \ref{rmk:noplanes}) and that
there are precisely two such components (Lemma
\ref{lma:exactlytwoplanes}).

\item Since \(u_t(0)=k\) for all \(t\), the limit building contains a
component in \(T^*K\), which must be a cylinder of the form
\(f_\gamma\) by Theorem \ref{thm:mohnke}(3). At least one of these
cylindrical components must correspond to an odd geodesic because
\(\alpha\) has odd intersection with \(K\) in \(H_2(X;\ZZ/2)\) and
the intersection number picks up contributions from each component
of the building inside \(T^*K\), which are nontrivial if and only
if \(\gamma\) is odd (Remark \ref{rmk:intersection}).

\item If there are two or more cylindrical components in \(T^*K\) then
there must be a further cylindrical component in \(\RR\times M\)
which connects the asymptotes of two of these cylinders. Since
this cylinder has no positive asymptote, this cannot exist by the
maximum principle. \qedhere

\end{enumerate}
\end{proof}
\begin{proof}[Proof of Theorem \ref{thm:klein}]\label{prf:thm:klein}
We chose \(k\in K\) not to lie on any of the cylinders \(f_\gamma\)
for \(\gamma\) an odd geodesic, but we have showed that these are
the only cylinders which can arise as components of the SFT limit
building. Since the SFT limit building must pass through \(k\), we
get a contradiction. \qedhere

\end{proof}
\bibliographystyle{plain}
\bibliography{klein}
\end{document}